\def\pmod #1{\ ({\rm{mod}}\ #1)}
\def\Z{\Bbb Z}
\def\Q{\Bbb Q}
\def\bg{\bigg}
\def\({\bg(}
\def\){\bg)}
\def\sgn{{\rm sgn}}
\def\sgn{{\rm sgn}}
\def\char{{\rm char}}
\def\inv{{\rm inv}}
\def\ve{\varepsilon}
\def\Ack{\medskip\noindent {\bf Acknowledgments}}
\theoremstyle{plain}
\newtheorem{theorem}{Theorem}
\newtheorem{lemma}{Lemma}
\newtheorem{corollary}{Corollary}
\newtheorem{conjecture}{Conjecture}
\theoremstyle{definition}
\theoremstyle{remark}
\newtheorem{remark}{Remark}
\begin{document}
 \baselineskip=17pt
\hbox{} {}
\medskip
\title[H.-L. Wu, Y.-F. She, L.-Y. Wang]
{Cyclotomic matrices and hypergeometric functions over finite fields}
\date{}
\author[H.-L. Wu, Y.-F. She, L.-Y. Wang]{Hai-Liang Wu, Yue-Feng She and Li-Yuan Wang}

\thanks{2020 {\it Mathematics Subject Classification}.
Primary 11C20; Secondary 11L05, 11R29.
\newline\indent {\it Keywords}. determinants, cyclotomic matrices, hypergeometric functions over finite fields.
\newline \indent  The first author was supported by the National Natural Science Foundation of China (Grant No. 12101321 and Grant No. 11971222) and the Natural Science Foundation of the Higher Education Institutions of Jiangsu Province (21KJB110002). The third author was supported by the Natural Science Foundation of the Higher Education Institutions of Jiangsu Province (21KJB110001).}

\address {(Hai-Liang Wu) School of Science, Nanjing University of Posts and Telecommunications, Nanjing 210023, People's Republic of China}
\email{\tt whl.math@smail.nju.edu.cn}

\address {(Yue-Feng She) Department of Mathematics, Nanjing
University, Nanjing 210093, People's Republic of China}
\email{{\tt she.math@smail.nju.edu.cn}}

\address {(Li-Yuan Wang) School of Physical and Mathematical Sciences, Nanjing Tech University, Nanjing 211816, People's Republic of China}
\email{\tt wly@smail.nju.edu.cn}

\begin{abstract}
With the help of hypergeometric functions over finite fields, we study some arithmetic properties of cyclotomic matrices involving characters and binary quadratic forms over finite fields. Also, we confirm some related conjectures posed by Zhi-Wei Sun.
\end{abstract}
\maketitle
\section{Introduction}

Determinants of special matrices have significant applications in many branches of mathematics. Readers may refer to Krattenthaler's survey paper \cite{K2} for recent progress and advanced techniques on this topic. In this paper, we mainly focus on the arithmetic properties of some determinants of cyclotomic matrices concerning characters of finite fields.
\subsection{Notations and Some History on This Topic}
Let $n$ be a positive integer and let $R$ be a commutative ring. Throughout this paper, for each $n\times n$ matrix $M=[a_{ij}]_{1\le i,j\le n}$ with $a_{ij}\in R$, we use $\det M$ or $|M|$ to denote the determinant of $M$. Also, we use the symbol $\mathbb{F}_q$ to denote the finite field of $q$ elements and $\char(\mathbb{F}_q)$ to denote the characteristic of $\mathbb{F}_q$. When $2\nmid q$ we let $\phi$ denote the unique quadratic multiplicative character of $\mathbb{F}_q$, which is the map $\mathbb{F}_q\rightarrow\mathbb{C}$ that sends $0$ to $0$, each non-zero square to $1$, and each non-square to $-1$. 

Let $p$ be an odd prime. Lehmer \cite{Lehmer} investigated the matrix
$$
L_p(a,b,c,d):=\left[a+b\phi(i)+c\phi(j)+d\phi(ij)\right]_{1\le i,j\le p-1},
$$
where $a,b,c,d$ are complex numbers. In \cite[Theorem 1]{Lehmer} Lehmer showed that for each positive integer $k$ we have
$$
L_p(a,b,c,d)^{k}=(p-1)^{k-1}\left[a_k+b_k\phi(i)+c_k\phi(j)+d_k\phi(ij)\right]_{1\le i,j\le p-1},
$$
where $a_k,b_k,c_k,d_k$ are complex numbers defined by
$$
\left[
\begin{array}{cccccc}
 a_k & b_k\\
 c_k & d_k
\end{array}
\right]=
\left[
\begin{array}{cccccc}
 a & b\\
 c & d
\end{array}
\right]^k.
$$
Moreover, Lehmer \cite[Theorem 2]{Lehmer} also determined all the roots of characteristic polynomial of $L_p(a,b,c,d)$. In the same paper, Lehmer also studied the matrix
$$
M_p(\alpha,\beta):=\left[\alpha+\phi(\beta+i+j)\right]_{1\le i,j\le p-1},
$$
where $\alpha$ is a complex number and $\beta$ is an integer. In \cite[Theorem 3]{Lehmer} Lehmer explicitly computed the characteristic polynomial of $M_p(\alpha,\beta)$.

Along this line, Carlitz \cite{carlitz} generalized Lehmer's results to the multiplicative characters modulo an arbitrary positive integer $m$ (see \cite[Theorem 2]{carlitz}). Furthermore, let $p$ be an odd prime and let $\chi$ be a multiplicative character modulo $p$. For any complex number $\gamma$, Carlitz \cite[Theorem 4]{carlitz} determined the characteristic polynomial of the matrix $C_p(\chi,\gamma)$,
where
$$
C_p(\chi,\gamma):=\left[\gamma+\chi(j-i)\right]_{1\le i,j\le p-1}.
$$

In recent years, by using sophisticated matrix decompositions, Chapman \cite{chapman,evil} and Vsemirnov \cite{M1,M2} studied many variants of the above results. For example, when $p\equiv 1\pmod 4$ is a prime, let $\varepsilon_p>1$ and $h(p)$ denote the fundamental unit and class number of the real quadratic field $\mathbb{Q}(\sqrt{p})$ respectively. Write
$$
\ve_p^{(2-(-1)^{(p-1)/4})h(p)}=a_p'+b_p'\sqrt{p}
$$
with $a_p',b_p'\in\Q$. Then Vsemirnov \cite{M1,M2} confirmed the ``evil determinants" conjecture posed by Chapman \cite{evil}, which states that
$$
\det\left[\phi(j-i)\right]_{1\le i,j\le \frac{p+1}{2}}=
\begin{cases}
 -a_p'   &\mbox{if}\ p \equiv 1 \pmod 4,\\
1 &\mbox{if} \ p\equiv 3 \pmod 4.
\end{cases}
$$

In 2019, Sun \cite{ffadeterminant} studied some matrices involving $\phi$ and quadratic forms over $\mathbb{F}_p$. For instance, Sun \cite[Theorem 1.2(ii)]{ffadeterminant} showed that
$$
S(d,p):=\det[\phi(i^2+dj^2)]_{1\le i,j\le (p-1)/2}=0
$$
whenever $d$ is a quadratic non-residue modulo the odd prime $p$. Also, Sun \cite{ffadeterminant} posed many conjectures involving the determinants of the form
$$
\det\left[\phi(f(i,j))\right]_{1\le i,j\le p-1},
$$
where $f(x,y)$ is a quadratic form over $\mathbb{F}_p$. Readers may refer to \cite{Dimitry, Wu} for the recent progress on this topic.
\subsection{Hypergeometric Functions over Finite Fields}
In 1987, Greene \cite{Greene} initiated the investigations of hypergeometric functions over finite fields. Greene showed that these functions have many properties which are analogous to the ordinary hypergeometric functions. Let $q$ be an odd prime power and let $\chi$ be a multiplicative character of $\mathbb{F}_q$. We also define $\chi(0)=0$.

Suppose $A,B$ are two multiplicative characters of $\mathbb{F}_q$. Let $\varepsilon$ be the trivial character and $\overline{A}$ denote the inverse of $A$, i.e., $A \overline{A}=\varepsilon$.  As an appropriate analog of the binomial coefficients, Greene \cite[Definition 2.4]{Greene} defined
$$
\binom{A}{B}:=\frac{B(-1)}{q}J(A,\overline{B}),
$$
where $J(A,\overline{B})$ is the Jacobi sum defined by 
$$J(A,\overline{B})=\sum_{x\in\mathbb{F}_q}A(x)\overline{B}(1-x).$$
With the above notations, Greene \cite[Definition 3.10]{Greene} gave the following definition. For multiplicative characters $A_0,A_1,\cdots,A_n, B_1, B_2,\cdots, B_n$ of $\mathbb{F}_q$ and $x\in\mathbb{F}_q$, the Gaussian hypergeometric function with respect to the above characters is defined by
$$
_{n+1}F_n\left(
\begin{array}{cccccc}
 A_0&A_1&\cdots&A_n\\
  &B_1&\cdots&B_n
\end{array}\bigg|x
\right)
=
\frac{q}{q-1}\sum_{\chi}\binom{A_0\chi}{\chi}\binom{A_1\chi}{B_1\chi}\cdots\binom{A_n\chi}{B_n\chi}\chi(x),
$$
where the summation runs over all multiplicative characters of $\mathbb{F}_q$. In particular, for multiplicative characters $A,B,C$ we have (see \cite[Theorem 3.6]{Greene})
\begin{equation}\label{Eq. 2F1}
_{2}F_1\left(
\begin{array}{cccccc}
 A & B\\
  & C
\end{array}\bigg|x
\right)
=
\varepsilon(x)\frac{BC(-1)}{q}\sum_{y\in\mathbb{F}_q}B(y)\overline{B}C(1-y)\overline{A}(1-xy).
\end{equation}
 The values of hypergeometric functions $ _{2}F_1$ were extensively investigated and have close relations with the number of rational points on algebraic curves over finite fields. For example, readers may refer to \cite{Bar1, Fuselier, Ono} for details.

\subsection{Main Results}
Let $q=2n+1$ be an odd prime power and let $\chi$ be a generator of the cyclic group of all multiplicative characters of $\mathbb{F}_q$. Let
$\mathbb{F}_q^{\times}:=\mathbb{F}_q\setminus\{0\}$ and let
$$
\mathbb{F}_q^{\times2}:=\{x^2:\ x\in\mathbb{F}_q^{\times}\}=\{\alpha_1,\alpha_2,\cdots,\alpha_n\}.
$$
For an arbitrary $d\in\mathbb{F}_q^{\times}$ and an arbitrary integer $1\le r\le q-2$ , as an extension of Sun's determinant $S(d,p)$, we consider the following determinant:
$$
S_q(r,d):=\det\left[\chi^r(\alpha_i+d\alpha_j)\right]_{1\le i,j\le n}.
$$
We first obtain the following result:
\begin{theorem}\label{Thm. A}
Let $q=2n+1$ be an odd prime power and let $\chi$ be a generator of the cyclic group of all multiplicative characters of $\mathbb{F}_q$. For any $d\in\mathbb{F}_q^{\times}$ and $1\le r\le q-2$, the following results hold.

{\rm (i)} If $d\not\in\mathbb{F}_q^{\times2}$ and $r\equiv n\pmod2$, then $S_q(r,d)=0$.

{\rm (ii)} If $d=1$, then
$$
S_q(r,1)=\frac{(-1)^{\frac{n(n+1)}{2}-r}}{2^n}\cdot
	\begin{cases}
		\prod_{k=1}^n\left(J(\chi^{k-r},\chi^r)-J(\phi\chi^{k-r},\chi^r)\right)&\mbox{if}\ 4\mid q-3,\\
		\prod_{k=1}^n\left(J(\chi^{k-r},\chi^r)+J(\phi\chi^{k-r},\chi^r)\right)&\mbox{if}\ 4\mid q-1.
	\end{cases}
$$

{\rm (iii)} If $d\in\mathbb{F}_q^{\times2}$ and $q\equiv3\pmod4$, then
$S_q(r,d)=S_q(r,1)$. Furthermore, in this case,
\begin{equation}\label{Eq. a in Thm. A}
S_q(r,1)
=\frac{q^n}{2^n}\prod_{k=1}^{n}\left(_{2}F_1\left(
\begin{array}{cccccc}
 \varepsilon & \chi^r\\
  & \chi^k
\end{array}\bigg|1
\right)-\ _{2}F_1\left(
\begin{array}{cccccc}
 \phi & \chi^r\\
  & \chi^k
\end{array}\bigg|1
\right)\right).
\end{equation}

{\rm (iv)} If $d\in\mathbb{F}_q^{\times2}$ and $q\equiv1\pmod4$, then $S_q(r,d)=\delta_d S_q(r,1)$,
where
$$
\delta_d=\begin{cases}
 1   &\mbox{if}\ d\ \text{is a $4$th power in}\ \mathbb{F}_q,\\
-1 &\mbox{otherwise.}
\end{cases}
$$
Moreover, in this case,
\begin{equation}\label{Eq. b in Thm. A}
S_q(r,1)=\frac{q^n(-1)^{r+n(n+1)/2}}{2^n}\prod_{k=1}^n\ _{2}F_1\left(
\begin{array}{cccccc}
 \chi^{-r} & \chi^{2k-2r}\\
  & \chi^{2k-r}
\end{array}\bigg|-1
\right).
\end{equation}
\end{theorem}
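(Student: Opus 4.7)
My plan is to realize the matrix $[\chi^r(\alpha_i+d\alpha_j)]$ as a group-circulant over $H:=\mathbb{F}_q^{\times 2}$ and diagonalize it using the characters of $H$. Factoring $\chi^r(\alpha_j)$ out of each column via $\chi^r(\alpha_i+d\alpha_j)=\chi^r(\alpha_j)\chi^r(\alpha_i\alpha_j^{-1}+d)$ reduces the problem to $S_q(r,d)=\prod_{j=1}^{n}\chi^r(\alpha_j)\cdot\det[\chi^r(\alpha_i\alpha_j^{-1}+d)]_{i,j}$; since the remaining entries depend only on $\alpha_i\alpha_j^{-1}\in H$, this is a Cayley-type circulant and the standard character-theoretic diagonalization over the cyclic group $H$ of order $n$ gives $\det[\,\cdot\,]=\prod_{s=0}^{n-1}\lambda_s$ with $\lambda_s=\sum_{h\in H}\chi^r(h+d)\chi^s(h)$. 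Writing the indicator $\mathbf{1}_H=\tfrac12(1+\phi)$, substituting $h=dt$, and applying $t\mapsto -t$ yields
\[
\lambda_s=\tfrac12\chi^{r+s}(d)\chi^s(-1)\bigl[J(\chi^s,\chi^r)+\phi(d)\phi(-1)J(\chi^{s+n},\chi^r)\bigr].
\]
A direct computation also gives $\prod_j\chi^r(\alpha_j)=\chi^r((-1)^{n-1})=(-1)^{r(n-1)}$, equal to $1$ when $q\equiv 3\pmod 4$ and to $(-1)^r$ when $q\equiv 1\pmod 4$.

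For (i), $\phi(d)=-1$, and I force one $\lambda_s$ to vanish. The substitution $t\mapsto 1/t$ in $J(\chi^k,\chi^r)$ yields the reflection identity $J(\chi^k,\chi^r)=\chi^r(-1)J(\chi^{-k-r},\chi^r)$. I then pick $s$ with $-s-r\equiv s+n\pmod{2n}$, i.e.\ $2s+r+n\equiv 0\pmod{2n}$; this congruence is solvable (uniquely mod $n$) precisely when $r+n$ is even, which is the hypothesis $r\equiv n\pmod 2$. For this $s$ the reflection identity forces $J(\chi^s,\chi^r)=(-1)^r J(\chi^{s+n},\chi^r)$, and since $\phi(-1)=(-1)^n=(-1)^r$, the bracket in $\lambda_s$ collapses to $[(-1)^r-\phi(-1)]J(\chi^{s+n},\chi^r)=0$, proving $S_q(r,d)=0$. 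For (ii) and (iii), write $d=e^2$ and let $\sigma\in S_n$ be the permutation $\alpha_{\sigma(j)}=e^2\alpha_j$; column permutation gives $S_q(r,d)=\mathrm{sgn}(\sigma)S_q(r,1)$. With $g$ a generator of $\mathbb{F}_q^\times$ and $e=g^a$, $\sigma$ corresponds to the shift $k\mapsto k+a\pmod n$ on $H\cong\mathbb{Z}/n$, a product of $\gcd(n,a)$ cycles of length $n/\gcd(n,a)$ with sign $(-1)^{n-\gcd(n,a)}$. When $n$ is odd this sign is always $+1$; when $n$ is even it equals $(-1)^a$, and $a$ even is equivalent to $d=g^{2a}$ being a fourth power in $\mathbb{F}_q$, giving $\mathrm{sgn}(\sigma)=\delta_d$.

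Finally, I translate the eigenvalues into hypergeometric values via \eqref{Eq. 2F1}. For $q\equiv 3\pmod 4$, substituting $(A,B,C)=(\varepsilon,\chi^r,\chi^k)$ and then $(\phi,\chi^r,\chi^k)$ into \eqref{Eq. 2F1} at $x=1$ and subtracting (using $J(X,Y)=J(Y,X)$ and $\chi^{2k}(-1)=1$) produces $\tfrac{2}{q}\lambda_{k-r}$; taking the product over $k=1,\ldots,n$ yields \eqref{Eq. a in Thm. A}. For $q\equiv 1\pmod 4$, I first collapse $J(\chi^s,\chi^r)+J(\chi^{s+n},\chi^r)=2\sum_{t\in H}\chi^s(t)\chi^r(1-t)$ and substitute $t=u^2$ to obtain $\sum_u\chi^{2s}(u)\chi^r(1-u)\chi^r(1+u)$, which equals $q(-1)^r\,{}_2F_1(\chi^{-r},\chi^{2s};\chi^{2s+r}\mid -1)$ by \eqref{Eq. 2F1} at $x=-1$ with $(A,B,C)=(\chi^{-r},\chi^{2s},\chi^{2s+r})$. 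Combining with the outer factor $(-1)^r$ and reindexing $s\equiv k-r\pmod n$ (valid since the hypergeometric factor depends only on $s$ modulo $n$) yields \eqref{Eq. b in Thm. A}. The main obstacle, I anticipate, is the careful sign and reindexing bookkeeping needed to match the products in \eqref{Eq. a in Thm. A} and \eqref{Eq. b in Thm. A} exactly—in particular collecting $(-1)^{r(n-1)}\prod_{s=0}^{n-1}(-1)^{r+s}$ into the stated $(-1)^{r+n(n+1)/2}$ for even $n$; the conceptual core (circulant diagonalization, the Jacobi-sum reflection identity, and the cycle count for shifts) is short.
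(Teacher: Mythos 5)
Your proposal is correct, and its computational core is the same as the paper's: after factoring $\chi^r(\alpha_j)$ out of the columns (the paper equivalently multiplies column $j$ by $\chi^{-r}(\alpha_j)$, which just shifts the eigenvalue index by $r$), the matrix $[\chi^r(\alpha_i\alpha_j^{-1}+1)]$ is diagonalized by the characters of $\mathbb{F}_q^{\times2}$ --- you cite the abelian group determinant, while the paper exhibits the eigenvectors $(\chi^k(\alpha_i))_i$ and checks their independence by a Vandermonde argument --- and the eigenvalues are converted into $_2F_1$ values at $1$ (resp.\ $-1$) by exactly the substitutions you describe ($x\mapsto-x$, and $t\mapsto u^2$ followed by $x\mapsto\sqrt{-1}\,x$), with the same sign bookkeeping leading to \eqref{Eq. a in Thm. A} and \eqref{Eq. b in Thm. A}. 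Your determination of $\sgn(\sigma)$ in (ii)/(iii) via the cycle structure of the shift $k\mapsto k+a$ on $\Z/n\Z$ is an equivalent computation of the paper's Lemma \ref{Lem. A of Thm. A}, which instead evaluates $\prod_{i<j}(d^2\alpha_j-d^2\alpha_i)/(\alpha_j-\alpha_i)=d^{n(n-1)}$ in $\mathbb{F}_q$. The one genuinely different step is part (i): the paper uses a two-line global symmetry, applying $\sigma_d$ and $d^n=-1$ to get $S_q(r,d)=(-1)^{n-1+r}S_q(r,d)=-S_q(r,d)$, whereas you keep the spectral decomposition for general $d$, express each eigenvalue through the Jacobi sums $J(\chi^s,\chi^r)$ and $J(\chi^{s+n},\chi^r)$, and use the reflection $J(\chi^k,\chi^r)=\chi^r(-1)J(\chi^{-k-r},\chi^r)$ to exhibit a single vanishing eigenvalue at the $s$ with $2s+r+n\equiv0\pmod{2n}$ (solvable exactly when $r\equiv n\pmod 2$). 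Both arguments are valid; the paper's is shorter, while yours is more informative, since it pinpoints which eigenvalue dies, at the modest cost of verifying the reflection identity in the degenerate cases under the convention $\chi(0)=0$ (it does hold there).
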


\begin{remark}\label{Rem. A}
When $r=n$, the equation (\ref{Eq. b in Thm. A}) is indeed an analogue of certain product of gamma functions. In fact, by Kummer's theorem (cf. \cite[p. 9]{Bailey})
\begin{equation}\label{Eq. a in Rem. A}
_{2}F_1\left(
\begin{array}{cccccc}
 \alpha & \beta\\
  & 1-\alpha+\beta
\end{array}\bigg|-1\right)
=\frac{\Gamma(1+\beta-\alpha)\Gamma\left(1+\frac{\beta}{2}\right)}{\Gamma(1+\beta)\Gamma\left(1+\frac{\beta}{2}-\alpha\right)},
\end{equation}
where $\Gamma(z)$ is the gamma function. Also, by \cite[(4.12)]{Greene} we know that
$$
_{2}F_1\left(
\begin{array}{cccccc}
 \phi & \chi^{2k}\\
  & \phi\chi^{2k}
\end{array}\bigg|-1
\right)
$$
can be viewed as an analogue of
$$
_{2}F_1\left(
\begin{array}{cccccc}
 1/2 & k/n\\
  & 1-1/2+k/n
\end{array}\bigg|-1\right).
$$
Using Legendre's duplication formula
$$
\Gamma(2x)\sqrt{2\pi}=2^{2x-\frac{1}{2}}\Gamma(x)\Gamma\left(x+\frac{1}{2}\right),
$$
it is easy to verify that
\begin{equation*}
\prod_{k=1}^{n}\ _{2}F_1\left(
\begin{array}{cccccc}
 1/2 & k/n\\
  & 1-1/2+k/n
\end{array}\bigg|-1\right)
=\left(\frac{1}{2}\right)^{\frac{3n+1}{2}}\prod_{k=1}^n
\frac{\Gamma\left(\frac{2k}{n}\right)\Gamma\left(\frac{k}{2n}\right)^2}{\Gamma\left(\frac{k}{n}\right)^3}.
\end{equation*}
Hence (\ref{Eq. b in Thm. A}) can be viewed as an analogue of the above identity when $r=n$.
\end{remark}

In 2019, Sun \cite[Conjecure 4.5(iii)]{ffadeterminant} posed the following conjecture.
\begin{conjecture}\label{Conj. A}{\rm (Sun)} Let $p>3$ be a prime and let $d\in\mathbb{Z}$ with $p\nmid d$. Let
$$
D(d,p):=\det\bigg[(i^2+dj^2)\left(\frac{i^2+dj^2}{p}\right)\bigg]_{1\le i,j\le (p-1)/2},
$$
where $(\frac{\cdot}{p})$ is the Legendre symbol. Then
$$
\left(\frac{D(d,p)}{p}\right)=\begin{cases}
 (\frac{d}{p})^{(p-1)/4}   &\mbox{if}\ p \equiv 1 \pmod 4,\\
(\frac{d}{p})^{(p+1)/4}(-1)^{(h(-p)-1)/2} &\mbox{if} \ p\equiv 3 \pmod 4,
\end{cases}
$$
where $h(-p)$ is the class number of $\mathbb{Q}(\sqrt{-p})$.
\end{conjecture}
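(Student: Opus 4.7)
The strategy is to interpret $D(d,p)$ modulo $p$ as a specialisation of the cyclotomic determinant $S_p(r,d)$ from Theorem~\ref{Thm. A}, and then take Legendre symbols. Since $a\phi(a)\equiv a^{(p+1)/2}\pmod p$ for every $a\in\mathbb{F}_p^\times$, I fix a prime $\mathfrak{p}$ above $p$ in $\mathbb{Z}[\zeta_{p-1}]$ and a generator $\chi$ of the character group of $\mathbb{F}_p^\times$ normalised so that $\chi(x)\equiv x\pmod{\mathfrak{p}}$ (the Teichm\"uller choice); then $\chi^{(p+1)/2}(x)\equiv x\phi(x)\pmod{\mathfrak{p}}$. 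Taking $r=(p+1)/2$, $n=(p-1)/2$ and $\alpha_i=i^2$, the entries of $D(d,p)$ match those of $S_p(r,d)$ modulo $\mathfrak{p}$, so $(D(d,p)/p)$ equals the image of $S_p(r,d)^{(p-1)/2}\bmod\mathfrak{p}$ in $\{\pm 1\}$.

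For $d\in\mathbb{F}_p^{\times 2}$ the theorem supplies $S_p(r,d)=S_p(r,1)$ (if $p\equiv 3\pmod 4$) or $\delta_d S_p(r,1)$ (if $p\equiv 1\pmod 4$). For a non-residue $d$ I would use $\chi^r(\alpha_i+d\alpha_j)=\chi^r(d)\chi^r(\alpha_i/d+\alpha_j)$ to factor out $\chi^r(d)^n$ and reindex, reducing $S_p(r,d)$ to a signed ``mixed'' determinant $\det[\chi^r(\beta_i+\alpha_j)]$ with rows indexed by non-residues $\beta_i$. A further substitution $\alpha_j\mapsto -\alpha_j$ combined with $\chi^r(-1)=(-1)^r$ should pair this determinant with $S_p(r,1)$ up to an explicit sign, producing the $(d/p)^{(p\pm 1)/4}$-type dependence predicted by the conjecture.

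Next I would evaluate $S_p(r,1)\bmod\mathfrak{p}$ by substituting Greene's representation (\ref{Eq. 2F1}) into (\ref{Eq. a in Thm. A}) or (\ref{Eq. b in Thm. A}): the prefactor $q^n$ cancels the $n$ factors of $1/q$ hidden in the ${}_2F_1$'s, leaving $S_p(r,1)=\pm 2^{-n}\prod_{k=1}^n J_k$ for explicit Jacobi-type character sums $J_k\in\mathbb{Z}[\zeta_{p-1}]$ with well-defined residues modulo $\mathfrak{p}$. Raising to the $(p-1)/2$-th power then expresses $(D(d,p)/p)$ as a product of Legendre symbols of the $J_k$.

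The final simplification is the main obstacle. For $p\equiv 1\pmod 4$, $\phi(-1)=1$ kills the $\delta_d$ factor, and the product of Legendre symbols should reduce to $1$ by a sign/parity count, so that the whole $d$-dependence comes from the non-residue step. For $p\equiv 3\pmod 4$ one must match the product of symbols with the class-number sign $(-1)^{(h(-p)-1)/2}$; the plan is to use $J(\chi^a,\chi^b)J(\chi^{-a},\chi^{-b})=p$ together with Davenport--Hasse-type relations to collapse the $n$-fold Jacobi product modulo $\mathfrak{p}$ to a Wilson-type product over quadratic residues, and then invoke the Chowla--Mordell congruence $\prod_{k=1}^{(p-1)/2}k^{\phi(k)}\equiv(-1)^{(h(-p)+1)/2}\pmod p$.
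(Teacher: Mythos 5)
Your reduction of $D(d,p)$ to $S_p((p+1)/2,d)$ via a Teichm\"uller character is sound as far as it goes, but the proposal has two genuine gaps, and they are precisely the two steps that carry all the content. First, the non-residue case: Theorem~\ref{Thm. A} only relates $S_q(r,d)$ to $S_q(r,1)$ when $d\in\mathbb{F}_q^{\times2}$ (part (i) gives vanishing for non-residue $d$ only when $r\equiv n\pmod 2$, which fails here since $r=n+1$). Your proposed fix --- factoring out $\chi^r(d)^n$ and reindexing to get $\det[\chi^r(\beta_i+\alpha_j)]$ with $\beta_i$ running over non-residues --- produces a determinant that is \emph{not} of the form $S_q(r,\cdot)$, and the substitution $\alpha_j\mapsto-\alpha_j$ does not repair this: it permutes the squares when $p\equiv1\pmod4$ and sends squares to non-squares when $p\equiv3\pmod4$, but in neither case does it convert the mixed determinant back into $S_p(r,1)$ up to sign. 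Second, the evaluation of $S_p(r,1)\bmod\mathfrak{p}$: you correctly observe that (\ref{Eq. a in Thm. A}) and (\ref{Eq. b in Thm. A}) unwind to a product of Jacobi sums, but the collapse of that $n$-fold product to $\pm1$ times a square, and the appearance of $(-1)^{(h(-p)-1)/2}$, is exactly the assertion to be proved; ``a sign/parity count'' and ``Davenport--Hasse-type relations'' are not arguments, and the congruence $\prod_k k^{\phi(k)}\equiv(-1)^{(h(-p)+1)/2}$ you invoke is not the Mordell congruence $((p-1)/2)!\equiv(-1)^{(h(-p)+1)/2}\pmod p$ that is actually needed. You also never address why $p\nmid D(d,p)$, which the conjecture implicitly requires.

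The paper avoids both difficulties by not using Theorem~\ref{Thm. A} at all. It works directly in $\mathbb{F}_q$: the entry equals $(\alpha_i+d\alpha_j)^{(q+1)/2}$, and after pulling $\alpha_j^{(q+1)/2}$ out of each column (a square times a controlled factor) the $(i,j)$ entry becomes $f(\alpha_i\cdot\alpha_j^{-1})$ for an explicit polynomial $f$ of degree less than $n$, obtained by reducing $(T+d)^{(q+1)/2}$ using $T^{(q-1)/2}=1$ on the squares. Lemma~\ref{Lem. C of Thm. B} then factors the determinant completely as the product of the coefficients of $f$ times Vandermonde factors, which handles residue and non-residue $d$ uniformly and makes the $d$-dependence explicit through the coefficients $\binom{(q+1)/2}{k}d^{(q+1)/2-k}$; the class-number sign enters only at the very end through $\binom{(p+1)/2}{(p+1)/4}$ and Mordell's congruence, and the signs of the permutations are controlled by Lerch's theorem (Lemma~\ref{Lem. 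A of Thm. B}). If you want to salvage your route, you would need an independent evaluation of $S_p((p+1)/2,d)$ for non-residue $d$ and a genuine Gauss-sum computation of the Jacobi product; as written, the proposal does not constitute a proof.
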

In this paper, we obtain the following generalized result.
\begin{theorem}\label{Thm. B}
Let $q=2n+1$ be an odd prime power and let $d\in\mathbb{F}_q^{\times}$. Suppose $\char(\mathbb{F}_q)> 3$. Let
$$
D(d,q)=\det\left[(\alpha_i+d\alpha_j)\phi(\alpha_i+d\alpha_j)\right]_{1\le i,j\le n}.
$$
If we view $D(d,q)$ as a determinant over $\mathbb{F}_q$, then the following results hold.

{\rm (i)} If $q\equiv1\pmod4$, then
$$
D(d,q)=d^{\frac{q-1}{4}}x_q(d)^2
$$
for some $x_q(d)\in\mathbb{F}_q$. In particular, if $q=p\equiv1\pmod4$ is a prime, then we further have $x_p(d)\in\mathbb{F}_p^{\times}$.

{\rm (ii)} If $q\equiv3\pmod4$, then
$$
D(d,q)=d^{\frac{q+1}{4}}(-1)^{\frac{q-3}{4}}\binom{(q+1)/2}{(q+1)/4}y_q(d)^2
$$
for some $y_q(d)\in\mathbb{F}_q$. In particular, if $q=p\equiv3\pmod4$ is a prime, then
$$
D(d,p)=d^{\frac{p+1}{4}}(-1)^{\frac{h(-p)-1}{2}}z_p(d)^2
$$
for some $z_p(d)\in\mathbb{F}_p^{\times}$, where $h(-p)$ is the class number of $\mathbb{Q}(\sqrt{-p})$.
\end{theorem}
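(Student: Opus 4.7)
My plan is to rewrite the matrix entries as pure powers via the identity $x\phi(x)=x^{(q+1)/2}$ (valid for every $x\in\F_q$) and then exploit a Vandermonde-type decomposition. This rewrite gives
\[
D(d,q)=\det\!\left[(\alpha_i+d\alpha_j)^{(q+1)/2}\right]_{1\le i,j\le n}.
\]
Expanding each entry by the binomial theorem and reducing exponents modulo $n$ (justified because every $\alpha_i\in\F_q^{\times2}$ satisfies $\alpha_i^n=1$) one finds
\[
M_{ij}=\sum_{e=0}^{n-1} c_e\,\alpha_i^e\,\alpha_j^{(1-e)\bmod n},
\]
with explicit $c_e$ built from $\binom{n+1}{k}d^k$: for $2\le e\le n-1$ exactly one value of $k$ contributes, while for $e\in\{0,1\}$ two values wrap around. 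Hence $M=A\cdot\mathrm{diag}(c_0,\ldots,c_{n-1})\cdot B$ where $A=[\alpha_i^e]$ is a Vandermonde matrix and $B$ is the same Vandermonde with columns permuted by the involution $\sigma\colon e\mapsto(1-e)\bmod n$. Taking determinants yields $D(d,q)=\sgn(\sigma)\,(\det A)^2\prod_e c_e$, and a direct cycle count gives $\sgn(\sigma)=(-1)^{(q-1)/4}$ when $q\equiv1\pmod 4$ and $(-1)^{(q-3)/4}$ when $q\equiv3\pmod 4$.

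The core of the argument is the evaluation of $\prod c_e$ modulo squares in $\F_q$. Using $d^n=\phi(d)\in\{\pm1\}$ one checks $c_0c_1=d(n+2)^2$ if $\phi(d)=1$ and $-n^2 d$ if $\phi(d)=-1$, and the remaining factors pair via the symmetry $\binom{n+1}{j}=\binom{n+1}{n+1-j}$. When $n$ is even, every binomial pairs off into a square, and after absorbing squares (including $-1$, which is itself a square since $q\equiv1\pmod 4$) and tracking the total power of $d$ (namely $d^{n(n-1)/2}\equiv d^{n/2}$ modulo squares) one arrives at $D(d,q)=d^{(q-1)/4}x_q(d)^2$. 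When $n$ is odd, the central binomial $\binom{n+1}{(n+1)/2}=\binom{(q+1)/2}{(q+1)/4}$ survives unpaired, and combining the $\sgn(\sigma)$ factor, the $-n^2$ in the nonsquare-$d$ case, the residue of $d^{n(n-1)/2}$, and the fact that $(-d)^{(q+1)/4}$ is automatically a square when $d$ is a nonsquare and $q\equiv3\pmod 4$ (because then $-d$ is a square and $(q+1)/2$ is even) produces the formula in (ii).

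For the prime refinements, $\det A\ne0$ since the $\alpha_i$ are distinct, and each $c_e$ is a $p$-adic unit when $p>3$ (the binomials $\binom{(p+1)/2}{k}$ with $k<p$ have no $p$-factor in numerator or denominator, and $n,n+2$ are nonzero mod $p$), so $x_q(d),y_q(d),z_p(d)\in\F_p^{\times}$. The class-number reformulation for $p\equiv3\pmod 4$ reduces (via Euler's criterion) to the congruence
\[
(-1)^{(p-3)/4}\binom{(p+1)/2}{(p+1)/4}\equiv(-1)^{(h(-p)-1)/2}\cdot(\text{square in }\F_p^{\times})\pmod p,
\]
i.e., an evaluation of the Legendre symbol of the central binomial in terms of $h(-p)$, which I would invoke from the literature. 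I expect the principal technical obstacle to lie in the delicate sign and exponent bookkeeping of the second step for $q\equiv3\pmod 4$ with $d$ a nonsquare, where the signs from $\sgn(\sigma)$, from $c_0c_1=-n^2d$, from $d^{n(n-1)/2}=(-1)^{(n-1)/2}$, and from the $(-d)^{(q+1)/4}$ adjustment must conspire precisely to yield the stated form, together with locating and correctly citing the class-number congruence needed in the prime refinement.
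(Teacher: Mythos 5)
Your proposal is correct and follows essentially the same route as the paper: rewriting the entries as $(\alpha_i+d\alpha_j)^{(q+1)/2}$, reducing exponents via $\alpha^n=1$, passing through a Vandermonde-type factorization (the paper invokes Krattenthaler's $\det[P(X_iY_j)]$ lemma together with Lerch's sign formula, where you write $M=A\cdot\mathrm{diag}(c_e)\cdot B$ and count cycles of the involution $e\mapsto 1-e$ directly), pairing the binomial coefficients modulo squares, and finally appealing to Mordell's congruence for $((p-1)/2)!$ in the class-number refinement. Your coefficient values $c_0c_1=d(n+2)^2$ or $-n^2d$, the exponent $d^{n(n-1)/2}$, the surviving central binomial for odd $n$, and the sign bookkeeping all agree with the paper's computation.
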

As a direct consequence of Theorem \ref{Thm. B}, we confirm Sun's conjecture.
\begin{corollary}\label{Cor. B}
Conjecture \ref{Conj. A} holds.
\end{corollary}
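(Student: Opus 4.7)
The plan is to deduce the corollary directly from Theorem \ref{Thm. B} by applying the Legendre symbol to each of its two conclusions. The key observation is that $\bigl(\tfrac{\cdot}{p}\bigr)$ depends only on the residue class mod $p$ and is multiplicative on $\mathbb{F}_p^\times$, and that the prime case of Theorem \ref{Thm. B} guarantees the auxiliary factors $x_p(d)$ and $z_p(d)$ lie in $\mathbb{F}_p^\times$; hence the Legendre symbol of $D(d,p)$ can be read off the stated factorization with no further work. In particular, because $p\nmid d$ and these auxiliary factors are units, $D(d,p)$ is itself nonzero mod $p$, so the Legendre symbol is genuinely $\pm 1$.

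Concretely, when $p \equiv 1 \pmod 4$, Theorem \ref{Thm. B}(i) with $q=p$ gives $D(d,p) \equiv d^{(p-1)/4} x_p(d)^2 \pmod p$ with $x_p(d) \in \mathbb{F}_p^\times$. The square factor contributes $+1$ to the Legendre symbol, so $\bigl(\tfrac{D(d,p)}{p}\bigr) = \bigl(\tfrac{d}{p}\bigr)^{(p-1)/4}$, matching the first case of Conjecture \ref{Conj. A}.

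When $p \equiv 3 \pmod 4$, Theorem \ref{Thm. B}(ii) with $q=p$ gives $D(d,p) \equiv d^{(p+1)/4}(-1)^{(h(-p)-1)/2} z_p(d)^2 \pmod p$ with $z_p(d) \in \mathbb{F}_p^\times$. Taking Legendre symbols produces $\bigl(\tfrac{d}{p}\bigr)^{(p+1)/4}\bigl(\tfrac{-1}{p}\bigr)^{(h(-p)-1)/2}$; the first supplementary law $\bigl(\tfrac{-1}{p}\bigr) = -1$ for $p\equiv 3\pmod 4$ then collapses the second factor to $(-1)^{(h(-p)-1)/2}$, matching the second case.

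At the corollary level there is essentially no obstacle, since the entire analytic content is packed into Theorem \ref{Thm. B}. The one point worth flagging in writing is that the prime-case refinements inside that theorem, asserting that $x_p(d)$ and $z_p(d)$ lie in $\mathbb{F}_p^\times$ rather than merely in the ambient $\mathbb{F}_q$, are indispensable here: without them one could not even lift the factorization to make sense of a Legendre symbol over $\mathbb{Z}$, nor cleanly discard the square factor as $+1$.
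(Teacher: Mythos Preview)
Your proposal is correct and follows exactly the route the paper intends: the authors state the corollary as ``a direct consequence of Theorem~\ref{Thm. B}'' and give no separate proof, so your argument---taking Legendre symbols of the factorizations in Theorem~\ref{Thm. B}(i),(ii) and using $\bigl(\tfrac{-1}{p}\bigr)=-1$ for $p\equiv 3\pmod 4$---is precisely the intended one. Your emphasis that the prime-case refinements $x_p(d),z_p(d)\in\mathbb{F}_p^{\times}$ are what make the deduction go through is well placed.
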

The outline of this paper is as follows. The proof of Theorem \ref{Thm. A} will be given in Section 2. We will prove Theorem \ref{Thm. B} in Section 3.

\section{Proof of Theorem \ref{Thm. A}}
In this section, we assume that $q=2n+1$ is an odd prime power.

For every $d\in\mathbb{F}_q^{\times}$, clearly $\sigma_d: \alpha_i\mapsto d^2\alpha_i$ is a permutation of the set 
$\mathbb{F}_q^{\times2}=\{\alpha_1,\alpha_2,\cdots,\alpha_n\}$. We begin with the following lemma.
\begin{lemma}\label{Lem. A of Thm. A}
Let $\sgn(\sigma_d)$ denote the sign of $\sigma_d$. Then
$$
\sgn(\sigma_d)=\phi(d)^{n-1}.
$$
\end{lemma}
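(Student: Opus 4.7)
\medskip\noindent\textbf{Proof proposal.} The plan is to recognize $d \mapsto \sgn(\sigma_d)$ as a group homomorphism from $\mathbb{F}_q^\times$ to $\{\pm 1\}$ and then pin it down by evaluating on a single generator. Since multiplication by $(d_1 d_2)^2$ on $\mathbb{F}_q^{\times 2}$ factors as multiplication by $d_1^2$ followed by multiplication by $d_2^2$, we have $\sigma_{d_1 d_2} = \sigma_{d_1} \circ \sigma_{d_2}$. Hence $\psi : \mathbb{F}_q^\times \to \{\pm 1\}$, $d \mapsto \sgn(\sigma_d)$, is a homomorphism. Similarly, $d \mapsto \phi(d)^{n-1}$ is a homomorphism. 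Since $\mathbb{F}_q^\times$ is cyclic, two such homomorphisms coincide if and only if they agree on a generator.

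Fix a generator $g$ of $\mathbb{F}_q^\times$, so that $\phi(g) = -1$. The key observation is that the sign of a permutation depends only on its cycle structure, so we are free to label the elements of $\mathbb{F}_q^{\times 2}$ in any order we like. Choose the convenient ordering $\alpha_i := g^{2i}$ for $1 \le i \le n$ (reading exponents mod $q-1 = 2n$, so that $\alpha_n = 1$). Then
\[
\sigma_g(\alpha_i) = g^2 \cdot g^{2i} = g^{2(i+1)} = \alpha_{i+1 \pmod n},
\]
which is precisely the $n$-cycle $(1\,2\,\cdots\,n)$. Therefore $\psi(g) = (-1)^{n-1}$, which matches $\phi(g)^{n-1} = (-1)^{n-1}$, and the lemma follows at once.

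There is essentially no real obstacle here; the only subtle point is the observation that the sign of $\sigma_d$ is intrinsic (independent of how we label the squares), which lets us pick the cyclic ordering $\alpha_i = g^{2i}$ that turns $\sigma_g$ into a single $n$-cycle. Without this reduction one would have to track the cycle structure of $\sigma_d$ for an arbitrary $d$ directly, computing that $\sigma_d$ decomposes into $\gcd(n,e)$ cycles of equal length $n/\gcd(n,e)$ when $d = g^e$ and verifying by cases that $(-1)^{n - \gcd(n,e)} = (-1)^{e(n-1)} = \phi(d)^{n-1}$; the homomorphism argument sidesteps this entirely.
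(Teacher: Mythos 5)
Your proof is correct, but it takes a genuinely different route from the paper's. The paper computes the sign in one line by viewing it inside $\mathbb{F}_q$ (legitimate since $q$ is odd, so $1\ne-1$ there) and applying the product formula
$\sgn(\sigma_d)=\prod_{1\le i<j\le n}\frac{d^2\alpha_j-d^2\alpha_i}{\alpha_j-\alpha_i}=d^{n(n-1)}=\phi(d)^{n-1}$,
a Zolotarev-style trick where the last step uses $d^{n}=d^{(q-1)/2}=\phi(d)$ in $\mathbb{F}_q$. You instead keep the sign as an honest integer, observe that $d\mapsto\sgn(\sigma_d)$ and $d\mapsto\phi(d)^{n-1}$ are both homomorphisms $\mathbb{F}_q^\times\to\{\pm1\}$, and match them on a generator $g$ by noting that $\sigma_g$ is a single $n$-cycle under the labelling $\alpha_i=g^{2i}$; your remark that the sign is independent of the labelling (relabelling conjugates the permutation) is exactly the point that makes this legitimate, and your parenthetical cycle-count verification for general $d=g^e$ also checks out. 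The paper's argument is shorter and requires no choice of generator or ordering, but does lean on transporting $\pm1$ into $\mathbb{F}_q$; yours is more elementary and is essentially the same pattern used to prove Lerch's Lemma 3.1 later in the paper, so it fits the surrounding material naturally. Both are complete proofs.
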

\begin{proof}
As $2\nmid q$, we can view $\sgn(\sigma_d$) as an element of $\mathbb{F}_q$. Then we have
$$
\sgn(\sigma_d)
=\prod_{1\le i<j\le n}\frac{d^2\alpha_j-d^2\alpha_i}{\alpha_j-\alpha_i}
=d^{n(n-1)}=\phi(d)^{n-1}.
$$
This completes the proof.
\end{proof}
Now we prove our first theorem.

{\bf Proof of Theorem \ref{Thm. A}.} (i) Suppose now that $d\in\mathbb{F}_q^{\times}\setminus\mathbb{F}_q^{\times2}$ and that $r\equiv n\pmod2$. Then by Lemma \ref{Lem. A of Thm. A} we have
\begin{align*}
S_q(r,d)&=\det\left[\chi^r(\alpha_j+d\alpha_i)\right]_{1\le i,j\le n}\\
&=\sgn(\sigma_d)\det\left[\chi^r(d^2\alpha_j+d\alpha_i)\right]_{1\le i,j\le n}
=(-1)^{n-1}\chi^r(d^n)S_q(r,d)\\
&=(-1)^{n-1+r}S_q(r,d).
\end{align*}
The last equality follows from $\chi^n=\phi$ and $\chi^r(d^n)=\chi^n(d^r)=\phi(d)^r=(-1)^r$. 
As $r\equiv n\pmod2$, we have $S_q(r,d)=-S_q(r,d)$ and hence $S_q(r,d)=0$.

(ii)-(iv) Suppose that $d=d_0^2$ for some $d_0\in\mathbb{F}_q^{\times}$.
Then by Lemma \ref{Lem. A of Thm. A}
$$
S_q(r,d)=S_q(r,d_0^2)=\sgn(\sigma_{d_0})S_q(r,1)=\phi(d_0)^{n-1}S_q(r,1).
$$
Hence we obtain that $S_q(r,d)=S_q(r,1)$ if $q\equiv3\pmod4$ and that
$S_q(r,d)=\delta_d S_q(r,1)$ if $q\equiv1\pmod4$,
where
$$
\delta_d=\begin{cases}
 1   &\mbox{if}\ d\ \text{is a $4$th power in}\ \mathbb{F}_q,\\
-1 &\mbox{otherwise.}
\end{cases}
$$
Now we focus on $S_q(r,1)$. Since
$$
T^n-1=\prod_{j=1}^n\left(T-\alpha_j\right),
$$
we obtain
$$
\prod_{j=1}^n\alpha_j=(-1)^{n-1}.
$$
Combining the above with the easily-proved fact that $\chi(-1)=-1$, we obtain that
\begin{equation}\label{Eq. det of Aq(r)}
S_q(r,1)=(-1)^{r(n-1)}\det A_q(r),
\end{equation}
where
$A_q(r)=[\chi^r(\alpha_i+\alpha_j)\chi^{-r}(\alpha_j)]_{1\le i,j\le n}$. For $k=1,2,\cdots,n$, let
$$
\lambda_k:=\sum_{j=1}^n\chi^r(1+\alpha_j)\chi^{k-r}(\alpha_j)
$$
and
$$
{\bf v}_k:=\left(\chi^k(\alpha_1),\chi^k(\alpha_2),\cdots,\chi^k(\alpha_n)\right)^T,
$$
where $M^T$ denotes the transpose of a matrix $M$. For each $i$, we have 
\begin{align*}
\sum_{j=1}^n\chi^r(\alpha_i+\alpha_j)\chi^{-r}(\alpha_j)\chi^k(\alpha_j)
&=\sum_{j=1}^n\chi^r\left(1+\frac{\alpha_j}{\alpha_i}\right)\chi^{k-r}\left(\frac{\alpha_j}{\alpha_i}\right)
\chi^k(\alpha_i)\\
&=\sum_{j=1}^n\chi^r(1+\alpha_j)\chi^{k-r}(\alpha_j)\chi^k(\alpha_i)\\
&=\lambda_k\chi^k(\alpha_i).
\end{align*}
This implies that
$$
A_q(r){\bf v}_k=\lambda_k{\bf v}_k
$$
for $k=1,2,\cdots,n$.
As
$$
\left| \begin{array}{cccccccc}
\chi^1(\alpha_1) & \chi^2(\alpha_1) & \ldots  & \chi^n(\alpha_1)  \\
\chi^1(\alpha_2) & \chi^2(\alpha_2) &  \ldots  &  \chi^n(\alpha_2)\\
\vdots & \vdots & \ddots  & \vdots    \\
\chi^1(\alpha_n)& \chi^2(\alpha_n) & \ldots &  \chi^n(\alpha_n)\\
\end{array} \right|^2=\prod_{1\le i<j\le n}\(\chi(\alpha_j)-\chi(\alpha_i)\)^2\ne0,
$$
the vectors ${\bf v}_1,\cdots,{\bf v}_n$ are linearly independent. Hence $\lambda_1,\cdots,\lambda_n$ are exactly all eigenvalues of $A_q(r)$ (counting multiplicities). By the above we obtain
\begin{equation}\label{Eq. product of eigenvalues}
S_q(r,1)=(-1)^{r(n-1)}\det A_q(r)=(-1)^{r(n-1)}\prod_{k=1}^n\lambda_k.
\end{equation}
Note that 
$$
\frac{\varepsilon(x)+\phi(x)}{2}=\begin{cases}1&\mbox{if}\ x\in\mathbb{F}_q^{\times2},
	\\0&\mbox{otherwise.}\end{cases}
$$
Hence 
\begin{equation}\label{Eq. eigenvalues}
\lambda_k=\frac{1}{2}\sum_{x\in\mathbb{F}_q}(\varepsilon(x)+\phi(x))\chi^r(1+x)\chi^{k-r}(x).
\end{equation}
Observe that 
\begin{align*}
	\frac{1}{2}\sum_{x\in\mathbb{F}_q}\varepsilon(x)\chi^r(1+x)\chi^{k-r}(x)
	&=\frac{(-1)^{k-r}}{2}\sum_{x\in\mathbb{F}_q}\varepsilon(x)\chi^r(1-x)\chi^{k-r}(x)\\
	&=\frac{(-1)^{k-r}}{2}J(\chi^{k-r},\chi^r),
\end{align*}
and that 
\begin{align*}
	\frac{1}{2}\sum_{x\in\mathbb{F}_q}\phi(x)\chi^r(1+x)\chi^{k-r}(x)
	&=\frac{(-1)^{k-r+(q-1)/2}}{2}\sum_{x\in\mathbb{F}_q}\phi(x)\chi^r(1-x)\chi^{k-r}(x)\\
	&=\frac{(-1)^{k-r+(q-1)/2}}{2}J(\phi\chi^{k-r},\chi^r).
\end{align*}

Hence, (\ref{Eq. eigenvalues}) becomes
\begin{equation}\label{Eq. jacobi sums and eigenvalues}
\lambda_k=\frac{(-1)^{k-r}}{2}
\begin{cases}
	\left(J(\chi^{k-r},\chi^r)-J(\phi\chi^{k-r},\chi^r)\right)&\mbox{if}\ q\equiv3\pmod4,\\
	\left(J(\chi^{k-r},\chi^r)+J(\phi\chi^{k-r},\chi^r)\right)&\mbox{if}\ q\equiv1\pmod4.
\end{cases}
\end{equation}
It is well-known that the above Jacobi sums can be expressed in terms of finite field hypergeometric functions. 

In fact, in the case $q\equiv3\pmod4$, by (\ref{Eq. 2F1}) we obtain that 
\begin{equation}\label{Eq. 2F1 for trivial character}
\begin{split}
\frac{1}{2}\sum_{x\in\mathbb{F}_q}\varepsilon(x)\chi^r(1+x)\chi^{k-r}(x) 
=&\frac{(-1)^{k-r}}{2}\sum_{x\in\mathbb{F}_{q}}\varepsilon(x)\chi^r(1-x)\chi^{k-r}(x)\\
=&\frac{(-1)^{k-r}}{2}\sum_{x\in\mathbb{F}_q}\varepsilon(1-x)\chi^r(x)\chi^{k-r}(1-x)\\
=&\frac{q}{2}\
_{2}F_1\left(
\begin{array}{cccccc}
 \varepsilon & \chi^r\\
  & \chi^k
\end{array}\bigg|1
\right),
\end{split}
\end{equation}
and that 
\begin{equation}\label{Eq. 2F1 for phi}
\begin{split}
\frac{1}{2}\sum_{x\in\mathbb{F}_q}\phi(x)\chi^r(1+x)\chi^{k-r}(x)
=&\frac{(-1)^{k-r+1}}{2}\sum_{x\in\mathbb{F}_q}\phi(x)\chi^r(1-x)\chi^{k-r}(x)\\
=&\frac{(-1)^{k-r+1}}{2}\sum_{x\in\mathbb{F}_q}\phi(1-x)\chi^r(x)\chi^{k-r}(1-x)\\
=&\frac{-q}{2}\
_{2}F_1\left(
\begin{array}{cccccc}
 \phi & \chi^r\\
  & \chi^k
\end{array}\bigg|1
\right).
\end{split}
\end{equation}
Combining (\ref{Eq. 2F1 for trivial character}) and (\ref{Eq. 2F1 for phi}) with (\ref{Eq. eigenvalues}), we obtain 
\begin{align*}
\lambda_k
=\frac{q}{2}\left(_{2}F_1\left(
\begin{array}{cccccc}
 \varepsilon & \chi^r\\
  & \chi^k
\end{array}\bigg|1
\right)-{}_{2}F_1\left(
\begin{array}{cccccc}
 \phi & \chi^r\\
  & \chi^k
\end{array}\bigg|1
\right)\right).
\end{align*}
This, together with (\ref{Eq. product of eigenvalues}), implies the desired result in Theorem \ref{Thm. A}(iii).

In the case $q\equiv1\pmod4$, we first have 
\begin{align*}
\lambda_k=\frac{1}{2}\sum_{x\in\mathbb{F}_q}\chi^r(1+x^2)\chi^{2k-2r}(x)
&=\frac{1}{2}\sum_{x\in\mathbb{F}_q}\chi^r(1-x^2)\chi^{2k-2r}(\sqrt{-1}x)\\
&=\frac{(-1)^{k-r}}{2}\sum_{x\in\mathbb{F}_q}\chi^r(1-x^2)\chi^{2k-2r}(x),
\end{align*}
where $\sqrt{-1}\in\mathbb{F}_q$ such that $(\sqrt{-1})^2=-1$. By (\ref{Eq. 2F1}), we obtain 
\begin{equation}\label{Eq. 2F1 for the case q=1 mod 4}
\lambda_k=\frac{(-1)^kq}{2}\
_{2}F_1\left(
\begin{array}{cccccc}
 \chi^{-r} & \chi^{2k-2r}\\
  & \chi^{2k-r}
\end{array}\bigg|-1
\right).
\end{equation}
Combining this with (\ref{Eq. product of eigenvalues}), we obtain
$$
S_q(r,1)=\frac{q^n(-1)^{r+n(n+1)/2}}{2^n}\prod_{k=1}^n\ _{2}F_1\left(
\begin{array}{cccccc}
 \chi^{-r} & \chi^{2k-2r}\\
  & \chi^{2k-r}
\end{array}\bigg|-1
\right).
$$

In view of the above, the proof of Theorem \ref{Thm. A} is now complete. \qed

\section{Proof of Theorem \ref{Thm. B}}
In this section, we assume that $q=2n+1$ is an odd prime power with $3\nmid q$. Also, for any $x,y\in\mathbb{F}_q$, we write $x\equiv y\pmod{\mathbb{F}_q^{\times2}}$ if there is an element $z\in\mathbb{F}_q^{\times}$ such that $x=yz^2$.

Let $a$ be an integer with $(a,n)=1$. We know that multiplication by $a$ induces
a permutation $\tau_a$ of $\Z/n\Z=\{\overline{0},\overline{1},...,\overline{n-1}\}$. Lerch \cite{ML} obtained the following result which determines the sign of $\tau_a$.
\begin{lemma}\label{Lem. A of Thm. B}
Let $\sgn(\tau_a)$ denote the sign of $\tau_a$. Then $$\sgn(\tau_a)=\begin{cases}(\frac{a}{n})&\mbox{if $n$ is odd},\\1&\mbox{if}\
n\equiv2\pmod4,\\(-1)^{\frac{a-1}{2}}&\mbox{if}\ n\equiv0\pmod4,\end{cases}$$
where $(\frac{\cdot}{n})$ denotes Jacobi symbol if $n$ is odd.
\end{lemma}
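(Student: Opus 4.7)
\medskip\noindent\textbf{Proof proposal.} The plan starts from the observation that $\tau_{ab} = \tau_a \circ \tau_b$, so $a \mapsto \sgn(\tau_a)$ is a group homomorphism $(\Z/n\Z)^{\times} \to \{\pm 1\}$; since the right-hand side in each of the three cases is also a homomorphism in $a$, it suffices to pin down the value on a generating set, and I would reduce everything via the Chinese Remainder Theorem to prime-power moduli.

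For the CRT step, when $n = n_1 n_2$ with $(n_1, n_2) = 1$, the bijection $\Z/n\Z \cong \Z/n_1\Z \times \Z/n_2\Z$ conjugates $\tau_a$ on the left to the product permutation $\tau_a \times \tau_a$ on the right. Since the sign of a permutation of a finite set is intrinsic (conjugation-invariant), the classical product-sign identity
$$\sgn(\sigma_1 \times \sigma_2) = \sgn(\sigma_1)^{|B|}\,\sgn(\sigma_2)^{|A|}$$
for a permutation of $A \times B$ gives
$$\sgn(\tau_a|_{\Z/n\Z}) = \sgn(\tau_a|_{\Z/n_1\Z})^{n_2}\,\sgn(\tau_a|_{\Z/n_2\Z})^{n_1}.$$
Iterating reduces to $n = p^e$ a prime power: for odd $n$ every exponent is odd so signs multiply transparently, while for $n = 2^e m$ with $m$ odd and $e \ge 1$, the $\Z/m\Z$-factor is raised to the even exponent $2^e$ and disappears, so the even case reduces to $n = 2^e$.

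For odd $n = p^e$ I would induct on $e$: the $\tau_a$-stable subset $p\Z/p^e\Z$ is equivariantly isomorphic to $\Z/p^{e-1}\Z$ via $pk \mapsto k$, contributing $\left(\frac{a}{p}\right)^{e-1}$ by induction, while on the cyclic group $(\Z/p^e\Z)^{\times}$ of even order $p^{e-1}(p-1)$ a primitive root $g$ acts as a single cycle, giving $\sgn(\tau_g) = -1 = \left(\frac{g}{p}\right)$, and multiplicativity in $a$ upgrades this to $\sgn(\tau_a|_{(\Z/p^e\Z)^{\times}}) = \left(\frac{a}{p}\right)$. The product is $\left(\frac{a}{p}\right)^e = \left(\frac{a}{p^e}\right)$, which CRT then assembles into $\left(\frac{a}{n}\right)$. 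For $n = 2^e$ I would likewise induct: the cases $e = 1$ (trivial, $\tau_a = \mathrm{id}$) and $e = 2$ (direct check on $(\Z/4\Z)^{\times} = \{1,3\}$) are immediate, and for $e \ge 3$ the decomposition $(\Z/2^e\Z)^{\times} = \langle -1 \rangle \times \langle 5 \rangle$ together with the product sign formula forces $\sgn(\tau_a|_{(\Z/2^e\Z)^{\times}}) = 1$ (both component signs are raised to even exponents $\ge 2$), so the inductive step propagates $(-1)^{(a-1)/2}$ from $\Z/2^{e-1}\Z$ up to $\Z/2^e\Z$.

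The main obstacle is the parity bookkeeping throughout: the CRT identity involves exponents $n_1, n_2$ whose parities dictate which factors survive, and inside the $2$-primary part one must verify carefully that the $\langle -1 \rangle \times \langle 5 \rangle$ direct-product action contributes no net sign once $e \ge 3$. Once these accounting lemmas are in place, the three cases of Lerch's formula drop out mechanically by assembling the prime-power pieces and reading off $+1$, $(-1)^{(a-1)/2}$, or $\left(\frac{a}{n}\right)$ as appropriate.
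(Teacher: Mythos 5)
The paper offers no proof of this lemma at all---it simply cites Lerch's 1896 note---so there is nothing internal to compare your argument against; what matters is whether your proposed proof stands on its own, and it does. The two pillars are both correct: $\tau_{ab}=\tau_a\circ\tau_b$ makes $a\mapsto\sgn(\tau_a)$ a character of $(\Z/n\Z)^{\times}$, and the product-sign identity $\sgn(\sigma_1\times\sigma_2)=\sgn(\sigma_1)^{|B|}\sgn(\sigma_2)^{|A|}$ combined with the ring isomorphism of CRT correctly reduces everything to prime powers, with the parity of the exponents $n_1,n_2$ doing exactly the work you describe (the odd part of an even $n$ is killed by the even exponent $2^e$, and odd prime-power factors of an odd $n$ survive with odd exponents, assembling the Jacobi symbol multiplicatively). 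The prime-power cases also check out: for odd $p^e$, the orbit decomposition $\Z/p^e\Z=(\Z/p^e\Z)^{\times}\sqcup p\Z/p^e\Z$ is $\tau_a$-stable, the unit part is a single even-length cycle under a primitive root $g$ (giving $\sgn=-1=(\frac{g}{p})$, hence $(\frac{a}{p})$ for all $a$ by multiplicativity), and induction on the non-unit part yields $(\frac{a}{p})^{e}=(\frac{a}{p^e})$; for $2^e$ with $e\ge 3$, translation on $\langle -1\rangle\times\langle 5\rangle$ has both component signs raised to even exponents ($2^{e-2}\ge 2$ and $2$), so the unit part contributes nothing and the value $(-1)^{(a-1)/2}$ propagates up from the base case $n=4$. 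This is the standard Zolotarev-style proof and is complete modulo the routine verifications you flag; the only presentational caveat is that a finished write-up should state the product-sign identity and the equivariance of $pk\mapsto k$ as explicit lemmas rather than asides, since the whole argument leans on them.
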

Clearly $\inv_q: \alpha_i\mapsto\alpha_i^{-1}$ is a permutation of $\alpha_1,\alpha_2,\cdots,\alpha_n$. Fix a generator $g$ of the cyclic group $\mathbb{F}_q^{\times}$. 
If we reindex the elements $g^0,g^2,\cdots,g^{2(n-1)}$ of $\mathbb{F}_q^{\times2}$ as $\overline{0},\overline{1},\cdots,\overline{n-1}$ of $\mathbb{Z}/n\mathbb{Z}$, then the permutation $\inv_q$ becomes the permutation $\tau_{-1}$ that sends $\overline{i}$ to $\overline{n-i}$ for $1\le i\le n-1$ while leaving $\overline{0}$ fixed. This permutation has $\frac{(n-1)(n-2)}{2}$ inversions (indeed, its inversions are all pairs $(\overline{i},\overline{j})$ with $0<i<j<n$, and there are $(n-1)(n-2)/2$ such pairs).
In view of the above we obtain the following result.
\begin{lemma}\label{Lem. B of Thm. B}
Let notations be as above. Then
$$
\sgn(\inv_q)=\sgn(\tau_{-1})=(-1)^{\frac{(n-1)(n-2)}{2}}=\begin{cases}(-1)^{\frac{n-1}{2}}&\mbox{if $q\equiv3\pmod4$},\\1&\mbox{if}\
q\equiv5\pmod8,\\-1&\mbox{if}\ q\equiv1\pmod8.\end{cases}
$$
\end{lemma}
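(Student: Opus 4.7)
The plan is to unpack the commutative diagram displayed just before the lemma and then invoke Lerch's formula (Lemma~\ref{Lem. A of Thm. B}) with $a=-1$. For the first step, I would fix the generator $g$ of $\F_q^{\times}$; then $\F_q^{\times2}=\{g^2,g^4,\cdots,g^{2n}\}$, and the map $\overline{i}\mapsto g^{2i}$ is a bijection $\Z/n\Z\to\F_q^{\times2}$. Under this relabeling, inversion on $\F_q^{\times2}$ sends $g^{2i}\mapsto g^{-2i}$, which corresponds to $\overline{i}\mapsto\overline{-i}$, namely $\tau_{-1}$. Since the sign of a permutation is invariant under conjugation by a bijection of the underlying set (this is exactly the content of the diagram), we conclude $\sgn(\inv_q)=\sgn(\tau_{-1})$.

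For the second step I would feed $a=-1$ into Lemma~\ref{Lem. A of Thm. B} and split into the three cases according to $n\pmod 4$, which corresponds to $q\pmod 8$ via $q=2n+1$. If $n$ is odd (so $q\equiv3\pmod4$), Lerch's formula yields $\sgn(\tau_{-1})=(\frac{-1}{n})$, and the standard evaluation of the Jacobi symbol at $-1$ gives $(\frac{-1}{n})=(-1)^{(n-1)/2}$. If $n\equiv2\pmod4$ (so $q\equiv5\pmod8$), the formula gives $\sgn(\tau_{-1})=1$. If $n\equiv0\pmod4$ (so $q\equiv1\pmod8$), it gives $\sgn(\tau_{-1})=(-1)^{(-1-1)/2}=-1$. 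These three outputs match the three cases claimed in the lemma. There is no genuine obstacle: once the diagram is read as a relabeling, the result is an immediate consequence of Lerch's theorem together with the well-known value $(\frac{-1}{n})=(-1)^{(n-1)/2}$ for odd $n$; the only minor check is the correspondence $n\bmod4\leftrightarrow q\bmod8$, which is immediate from $q=2n+1$.
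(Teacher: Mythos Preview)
Your proposal is correct and follows exactly the paper's approach: the paper itself presents the commutative diagram, states ``In view of the above we obtain the following result,'' and leaves the case split implicit, so your write-up simply makes explicit the application of Lerch's formula with $a=-1$ and the translation $n\bmod 4\leftrightarrow q\bmod 8$.
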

We also need the following lemma (cf. \cite[Lemma 10]{K2}).
\begin{lemma}\label{Lem. C of Thm. B}
Let $R$ be a commutative ring and let $m$ be a positive integer. Set $P(T)=p_{m-1}T^{m-1}+\cdots+p_1T+p_0\in R[T]$. Then
$$
\det[P(X_iY_j)]_{1\le i,j\le m}
=\prod_{i=0}^{m-1}p_i\prod_{1\le i<j\le m}\left(X_i-X_j\right)\left(Y_i-Y_j\right).
$$
\end{lemma}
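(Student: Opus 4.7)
The plan is to recognize $[P(X_iY_j)]_{1\le i,j\le m}$ as a triple product $ADB$ whose determinants are all classical: two Vandermonde matrices and a diagonal matrix of coefficients. Expanding
$$P(X_iY_j)=\sum_{k=0}^{m-1}p_kX_i^kY_j^k,$$
we see that $[P(X_iY_j)]=ADB$, where $A:=[X_i^{k-1}]_{1\le i,k\le m}$ is Vandermonde in the $X$-variables, $D:=\mathrm{diag}(p_0,p_1,\ldots,p_{m-1})$ is the diagonal matrix of coefficients, and $B:=[Y_j^{k-1}]_{1\le k,j\le m}$ is the transpose of a Vandermonde matrix in the $Y$-variables. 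One unwinds this by checking that the $(i,j)$-entry of $ADB$ is indeed $\sum_{k=1}^m X_i^{k-1}p_{k-1}Y_j^{k-1}=P(X_iY_j)$.

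Taking determinants, multiplicativity gives $\det[P(X_iY_j)]=\det(A)\det(D)\det(B)$. The classical Vandermonde identity yields $\det A=\prod_{1\le i<j\le m}(X_j-X_i)$ and $\det B=\det(B^{T})=\prod_{1\le i<j\le m}(Y_j-Y_i)$, while $\det D=\prod_{i=0}^{m-1}p_i$. Since $(X_j-X_i)(Y_j-Y_i)=(X_i-X_j)(Y_i-Y_j)$ pair by pair, the result may be rewritten as
$$\prod_{i=0}^{m-1}p_i\cdot\prod_{1\le i<j\le m}(X_i-X_j)(Y_i-Y_j),$$
which is the asserted identity. No substantive obstacle appears; the argument is a direct consequence of the multiplicativity of the determinant and the Vandermonde formula. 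The only step requiring care is the index bookkeeping, namely verifying that $A$ is a Vandermonde matrix while $B$ is the transposed form, so that the matrix product $ADB$ genuinely recovers $[P(X_iY_j)]$.
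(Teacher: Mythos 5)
Your proof is correct, and it is the standard argument: the paper itself gives no proof of this lemma but cites it as Lemma 10 of Krattenthaler's ``Advanced determinant calculus: a complement,'' where it is established by exactly this factorization into a Vandermonde matrix, a diagonal matrix of coefficients, and a transposed Vandermonde matrix. The index bookkeeping and the sign cancellation $(X_j-X_i)(Y_j-Y_i)=(X_i-X_j)(Y_i-Y_j)$ are both handled correctly.
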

We are now in a position to prove our main result.

{\bf Proof of Theorem \ref{Thm. B}.} Recall that
$$
D(d,q)=\det\left[(\alpha_i+d\alpha_j)\phi(\alpha_i+d\alpha_j)\right]_{1\le i,j\le n}.
$$
If we view $D(d,q)$ as a determinant over $\mathbb{F}_q$, then
$$
D(d,q)=\det\left[\left(\alpha_i+d\alpha_j\right)^{\frac{q+1}{2}}\right]_{1\le i,j\le n}.
$$
Thus
\begin{equation}\label{Eq. 3F1}
D(d,q)
\equiv\det\left[\left(\frac{\alpha_i}{\alpha_j}+d\right)^{\frac{q+1}{2}}\right]_{1\le i,j\le n}\pmod{\mathbb{F}_q^{\times2}}.
\end{equation}
Noting that
$$
\left(\frac{\alpha_i}{\alpha_j}\right)^{\frac{q-1}{2}}=1\ \text{and}\ d^{\frac{q-1}{2}}=\phi(d), 
$$
one can verify that
\begin{align*}
&\left(\frac{\alpha_i}{\alpha_j}+d\right)^{\frac{q+1}{2}}\\
=&\sum_{k=0}^{\frac{q+1}{2}}\binom{\frac{q+1}{2}}{k}\left(\frac{\alpha_i}{\alpha_j}\right)^kd^{\frac{q+1}{2}-k}\\
=&\sum_{k=2}^{\frac{q-3}{2}}\binom{\frac{q+1}{2}}{k}\left(\frac{\alpha_i}{\alpha_j}\right)^kd^{\frac{q+1}{2}-k}+\left(\frac{1}{2}+\phi(d)\right)d+\left(\frac{\phi(d)}{2}+1\right)\frac{\alpha_i}{\alpha_j}.
\end{align*}
Hence 
$$\left(\frac{\alpha_i}{\alpha_j}+d\right)^{\frac{q+1}{2}}=f\left(\frac{\alpha_i}{\alpha_j}\right),$$
where
$$
f(T)
=\left(\frac{1}{2}+\phi(d)\right)d+\left(\frac{\phi(d)}{2}+1\right)T
+\sum_{k=2}^{\frac{q-3}{2}}\binom{\frac{q+1}{2}}{k}d^{\frac{q+1}{2}-k}T^k.
$$
Observe that 
$$
\left(\frac{1}{2}+\phi(d)\right)\left(\frac{\phi(d)}{2}+1\right)\equiv \phi(d)\pmod{\mathbb{F}_q^{\times2}}
$$
(just check both cases $\phi(d)=1$ and $\phi(d)=-1$). By this observation, if we let $C_f$ denote the product of the coefficients of $f(T)$, then 
\begin{align*}
C_f
&=d\left(\frac{1}{2}+\phi(d)\right)\left(\frac{\phi(d)}{2}+1\right)\times\prod_{k=2}^{\frac{q-3}{2}}\binom{\frac{q+1}{2}}{k}d^{\frac{q+1}{2}-k}\\
&=d\frac{\left(\frac{1}{2}+\phi(d)\right)\left(\frac{\phi(d)}{2}+1\right)}
{\left(\frac{q+1}{2}\cdot d^{(q+1)/2}\right)^2}
\times\prod_{k=0}^{\frac{q+1}{2}}\binom{\frac{q+1}{2}}{k}d^{\frac{q+1}{2}-k}\\
&\equiv d\phi(d)\times\prod_{k=0}^{\frac{q+1}{2}}\binom{\frac{q+1}{2}}{k}d^{\frac{q+1}{2}-k}\pmod{\mathbb{F}_q^{\times2}}
\end{align*} 
Thus, applying Lemma \ref{Lem. C of Thm. B} to $P(T)=f(T)$, we rewrite (\ref{Eq. 3F1}) as
\begin{align*}
D(d,q)
&\equiv\sgn(\inv_q)\cdot d\phi(d)\cdot\prod_{1\le i<j\le n}\left(\alpha_j-\alpha_i\right)^2\cdot
\prod_{k=0}^{\frac{q+1}{2}}\binom{\frac{q+1}{2}}{k}d^{\frac{q+1}{2}-k}\\
&\equiv\sgn(\inv_q)\cdot d\phi(d)\cdot\prod_{k=0}^{\frac{q+1}{2}}\binom{\frac{q+1}{2}}{k}d^{\frac{q+1}{2}-k}\\
&\equiv\sgn(\inv_q)\cdot d^{1+(n+1)(n+2)/2}\phi(d)\cdot\prod_{k=0}^{\frac{q+1}{2}}\binom{\frac{q+1}{2}}{k}
\pmod{\mathbb{F}_q^{\times2}}.
\end{align*}
Now we divide the remaining proof into two cases.

{\bf Case 1.} $q\equiv1\pmod4$.

In this case, since $\pm1\in\mathbb{F}_q^{\times2}$ and $\char(\mathbb{F}_q)>3$, one can verify that
$$
\sgn(\inv_q)\cdot d^{1+(n+1)(n+2)/2}\phi(d)
\equiv d^{\frac{q-1}{4}}\pmod{\mathbb{F}_q^{\times2}},
$$
and that
$$
\prod_{k=0}^{\frac{q+1}{2}}\binom{\frac{q+1}{2}}{k}\in\{0\}\cup\mathbb{F}_q^{\times2}.
$$
Hence there is an element $x_q(d)\in\mathbb{F}_q$ such that
$$
D(d,q)=d^{\frac{q-1}{4}}x_q(d)^2.
$$
Moreover, if $q=p\equiv1\pmod4$ is a prime, then clearly
$$
\prod_{k=0}^{\frac{p+1}{2}}\binom{\frac{p+1}{2}}{k}\in\mathbb{F}_p^{\times2}.
$$
Hence we can further obtain that $x_p(d)\in\mathbb{F}_p^{\times}$.

{\bf Case 2.} $q\equiv3\pmod4$.

In this case, noting that $d\equiv\phi(d)\pmod{\mathbb{F}_q^{\times2}}$, by Lemma \ref{Lem. B of Thm. B} we obtain that
$$
\sgn(\inv_q)\cdot d^{1+(n+1)(n+2)/2}\phi(d)
\equiv (-1)^{\frac{q-3}{4}}d^{\frac{q+1}{4}}\pmod{\mathbb{F}_q^{\times2}},
$$
and that
$$
\prod_{k=0}^{\frac{q+1}{2}}\binom{\frac{q+1}{2}}{k}
\in\{0\}\cup\binom{(q+1)/2}{(q+1)/4}\mathbb{F}_q^{\times2}.
$$
In view of the above there is an element $y_q(d)$ such that
$$
D(d,q)=d^{\frac{q+1}{4}}(-1)^{\frac{q-3}{4}}\binom{(q+1)/2}{(q+1)/4}y_q(d)^2.
$$
Moreover, if $q=p\equiv3\pmod4$ is a prime, then clearly
$$
\prod_{k=0}^{\frac{p+1}{2}}\binom{\frac{p+1}{2}}{k}
\in\binom{(p+1)/2}{(p+1)/4}\mathbb{F}_p^{\times2}.
$$
Note that
$$
\binom{(p+1)/2}{(p+1)/4}\equiv \frac{1}{2}\cdot\frac{p-1}{2}!
\equiv (-1)^{\frac{h(-p)+1}{2}+\frac{p+1}{4}}\pmod{\mathbb{F}_p^{\times2}}.
$$
The last congruence follows from $2\equiv(-1)^{\frac{p+1}{4}}\pmod{\mathbb{F}_p^{\times2}}$ and Mordell's result \cite{Mordell} which states that if $p\equiv3\pmod4$ is a prime greater that $3$, then
$$
\frac{p-1}{2}!\equiv(-1)^{\frac{h(-p)+1}{2}}\pmod p.
$$
By the above results, we see that there is an element $z_p(d)\in\mathbb{F}_p^{\times}$ such that
$$
D(d,p)=d^{\frac{p+1}{4}}(-1)^{\frac{h(-p)-1}{2}}z_p(d)^2.
$$

This completes the proof.\qed

\Ack\ We would like to thank the two anonymous referees for their helpful comments. The first author also thanks Prof. Hao Pan and Dr. He-Xia Ni for their steadfast encouragement.

This work was supported by the National Natural Science Foundation of China (Grant No. 12101321 and Grant No. 11971222) and the Natural Science Foundation of the Higher Education Institutions of Jiangsu Province (Grant No. 21KJB110002 and Grant No. 21KJB110001).

\end{document}